\documentclass{article}
\usepackage{enumerate}
\usepackage{amsmath,amssymb,amsthm}
\usepackage{graphicx}

\usepackage[dvipsnames]{xcolor}
\usepackage{hyperref}
\hypersetup{
    colorlinks=true,
    linkcolor=cyan!80!black,
    citecolor=MidnightBlue,
    urlcolor=magenta,
}

% Theorems, Lemmas, etc.
\theoremstyle{plain}
\newtheorem{theorem}{Theorem}

\newtheorem{lemma}{Lemma}

\theoremstyle{definition}

\newtheorem{definition}{Definition}

% Commenting

% Macros

\let\tilde\widetilde

\setlength{\parskip}{0.5\baselineskip}%
\setlength{\parindent}{1em}%

\title{A short proof of free energy limit of two spin spherical Sherrington-Kirkpatrick model at any temperature}
\author{Debapratim Banerjee\\
    Department of Mathematics \\
    Ashoka University, India\\
    Email id: debapratim.banerjee@ashoka.edu.in
}
\begin{document}

\maketitle
\begin{abstract}
    In this paper, we present a short proof of the limit of free energy of spherical $2$ spin Sherrington-Kirkpatrick (SSK) model without external field. This proof works for all temperatures and is based on the Laplace method of integration and considering a discrete approximation of the whole space. This proof is general enough and can be adapted to any interaction matrix where the eigenvalue distribution has some nice properties. The proof in the high-temperature case is the same as the proof given in \cite{belius2019tap}. However, the low-temperature case is new.
\end{abstract}
\section{Introduction}
The Sherrington- Kirkpatrick model (SK model) is an important model in mathematical physics originally proposed by Sherrington and Kirkpatrick \cite{sherrington1975solvable}. This model undergoes a phase transition with respect to the temperature parameter. Earlier, rigorous studies dealt with the high temperature phase of the model. One might look at \cite{ALR87} for example. In the physics literature, there were two parallel approaches to work with the model. The first approach is replica based and is known as the Parisi approach \cite{mezard1987spin} and the other approach is by Thouless, Anderson and Palmar and is known as the TAP approach \cite{thouless1977solution}. Perhaps, the most significant break through in this literature is by Talagrand where he rigorously proved the Parisi formula in his seminal paper \cite{Tal05}. Based on this paper, a huge literature was built by several other authors. On the other hand, to the best of my knowledge, a direct proof of the TAP approach for Ising spins is still not available in the literature. This project started in early 2024 when the author tried to find a direct approach to the TAP free energy by diagonalizing the Hamiltonian. Here, he came with a conjecture about the free energy at low temperature for both Ising and Spherical cases. This note was shared with experts through private communications. In order to work with this new approach, we came across this easy proof of free energy of Spherical SK model. Spherical SK model without an external fields is a solved problem. Many papers contain the result of this paper. For example, one might look at \cite{talagrand2006free}, \cite{panchenko2007overlap}, \cite{belius2019tap}, \cite{baik2017fluctuations} etc. However, the proof presented in this paper is quite short and neat. This proof answers an earlier question asked by Belius through a private communication which is a key step to prove the TAP type free energy the author proposed. The current proof in this paper works for all temperatures. As the first draft of this paper was completed, it was pointed out by Belius that the proof in the high-temperature regime is same as the proof given in \cite{belius2019tap}. However, the proof in the low-temperature phase is original. The key idea of the proof is as follows: After diagonalizing the matrix, we look at the overlap of the spin vectors with the eigenvectors of the interaction matrix. We see the whole problem as a Laplace integration problem and the partition the space into several disjoint parts. We consider the case when higher eigenvalues have a non-trivial overlap in the corresponding eigenvectors and the smaller eigenvalues will correspond to a high-temperature calculation. We use large deviation theory for the higher eigenvalues and optimize over the whole space. Since the eigenvector overlaps with the spins are spherically symmetric for the spherical symmetric case, the calculation becomes easy. In particular, we show that for an inverse temperature $\beta >1$, the combined overlap at the top few $o(n)$ eigenvalues is $\frac{\beta-1}{\beta}$. We have a conjecture for the ISING case as well. The calculation in this paper gives a rough estimate which is enough to calculate the free energy limits in the spherical case. However, a detailed analysis of the high-temperature part should give some deeper insights about the free energy like the fluctuation. This method gives an alternative and direct way to look at the free energy of the SK model.

%As this work was being conducted, parallelly Belius and his group came with another solution \cite{??}.  

\section{Formal definition of the model}
In this section, we give the formal definition of the model.
\begin{definition}(GOE matrix)
    Let $X_{n}= \frac{1}{\sqrt{n}}\left(  x_{i,j} \right)_{1\le i \le j \le n}$ be a symmetric matrix of dimension $n \times n$. It is called a GOE matrix (Gaussian orthogonal ensemble) if $x_{i,j}\sim N(0,1)$ for $1\le i < j \le n$, $x_{i,i} \sim N(0,2)$ for $1 \le i \le n$ and the random variables are independent.
\end{definition}
Let $\lambda_{1}\ge \ldots \ge \lambda_{n}$ be the eigenvalues of the matrix $X_{n}$. It is a classical result in random matrix theory that 
\[
\frac{1}{n}\sum_{i=1} \delta_{\lambda_{i}}
\]
converges weakly to the semicircular distribution in the almost sure sense. We now formally define the semicircular law.
\begin{definition}\label{def:sc}
    The semicircular law is a probability distribution on $[-2,2]$ and it's probability density function is 
    \begin{equation}
        f(x)= \frac{1}{2\pi} \sqrt{4-x^2}~ \mathbb{I}_{|x|\le 2}.
    \end{equation}
    In this paper, we denote the probability measure of the semicircular law by $\mu_{\mathrm{sc}}$.
\end{definition}
Another important concept we need is the Stieltjes transform of a probability distribution.
\begin{definition}(Stieltjes transform)
    Let $\mu$ be a probability distribution supported on  $I \subset \mathbb{R}$. Then the Stieltjes transform of $\mu$ at a (possibly complex value) $z \in \mathbb{C} \backslash I$ is defined as 
    \[
     S_{\mu}(z) = \int_{I} \frac{1}{x-z} d \mu(x).
    \]
    In particular, when $\mu$ is the semicircular law, we have 
    \[
    S_{\mu}(z) = \frac{-z + \sqrt{z^2-4}}{2}
    \]
    for $z \in \mathbb{C} \backslash [-2,2]$.
\end{definition}
\begin{definition}(Uniform distribution on the sphere)
For any $n \ge 2$, define $\mathbb{S}_{n-1}$ as the set of all $\underline{\sigma}=(\sigma_{1},\ldots, \sigma_{n})' \in \mathbb{R}^{n}$ such that $||\underline{\sigma}||^{2}=1$. $\mathbb{S}_{n-1}$ is called the unit sphere in $\mathbb{R}^{n}$. Let $\mu_{n}$ be the uniform probability measure on $\mathbb{S}_{n-1}$.
\end{definition}
\begin{definition}(Hamiltonian of SK model)
    Let $X_{n}$ be a G.O.E. matrix of dimension $n \times n$. For any $\underline{\sigma} \in \mathbb{S}_{n-1}$, the Hamiltonian of the Sherrington-Kirkpatrick model is defined as 
    \[
    H_{n}(\underline{\sigma})= \frac{n}{2} \underline{\sigma}' X_{n} \underline{\sigma}.
    \]
\end{definition}
\begin{definition}(Partition function and free energy)\label{def:partitionfree}
    The partition function of the spherical-SK model at an inverse temperature $\beta >0$, denoted as $Z_{n}(\beta)$, is defined as follows:
    \[
      Z_{n}(\beta)= \int_{\mathbb{S}_{n-1}} \exp\left\{ \beta H_{n}(\underline{\sigma}) \right\} d\mu_{n}(\underline{\sigma}).
    \]
    The Free energy, $F_{n}(\beta)$ is defined as $\log (Z_{n}(\beta))$.
\end{definition}
Free energy is a component of central interest in the study of spin-glass models and this paper is about the limit of $\frac{F_{n}(\beta)}{n}$ as $n \to \infty$. 
\section{Main result}
Now, we state the main result of this paper: 
\begin{theorem}\label{thm:main}(Free energy limit at any temperature)
Let $Z_{n}(\beta)$ and $F_{n}(\beta)$ be as defined in Definition \ref{def:partitionfree}. Then 
\begin{equation}
    \lim_{n \to \infty} \frac{F_{n}(\beta)}{n} = \left\{
     \begin{array}{cc}
         \frac{\beta^2}{4} & \text{whenever } \beta \le 1 \\
          \beta - \frac{3}{4}- \frac{1}{2}\log(\beta)& \text{whenever } \beta >1
     \end{array}
    \right.
\end{equation}
in the almost sure sense.
\end{theorem}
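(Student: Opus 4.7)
By orthogonal invariance of $\mu_n$, diagonalise $X_n = O\Lambda O^\top$ and substitute $u = O^\top \underline{\sigma}$; then $u$ is still uniform on $\mathbb{S}_{n-1}$ and the Hamiltonian becomes the quadratic form $\frac{n}{2}\sum_i \lambda_i u_i^2$. The problem thus reduces to the asymptotics of a sphere integral that depends on the eigenvalues only through their empirical distribution, which converges a.s.\ to $\mu_{\mathrm{sc}}$. For $\beta\le 1$ I would follow the Laplace/saddle-point proof of \cite{belius2019tap}: encode the constraint $\|u\|^2=1$ by a Laplace transform and reduce $Z_n(\beta)$ to a contour integral of the form
\[
Z_n(\beta)\propto \int e^{\,nz/2 \,-\, \frac12\sum_i \log(z-\beta\lambda_i)}\,dz.
\]
The saddle $z^\star$ solves $\int(z^\star-\beta x)^{-1}\,d\mu_{\mathrm{sc}}(x)=1$, which via the semicircular Stieltjes transform has the unique real solution $z^\star=1+\beta^2$ provided $z^\star\ge 2\beta$, i.e.\ provided $\beta\le 1$; evaluating the resulting action at $z^\star$ yields the limit $\beta^2/4$.

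For $\beta>1$ the saddle equation has no admissible real solution, so a different method is required. I would exploit spherical symmetry by stratifying $u$ according to its mass on the top eigenvectors. Fix $k_n\to\infty$ with $k_n=o(n)$ and set $T:=\sum_{i=1}^{k_n}u_i^2$. Under $\mu_n$, $T$ has a $\mathrm{Beta}(k_n/2,(n-k_n)/2)$ density $p_n(t)$, and conditionally on $T=t$ the top block $(u_1,\dots,u_{k_n})$ and the bot block $(u_{k_n+1},\dots,u_n)$ are independent and uniformly distributed on spheres of radii $\sqrt{t}$ and $\sqrt{1-t}$ respectively. Writing
\[
Z_n(\beta) = \int_0^1 p_n(t)\, I_n^{\mathrm{top}}(t)\, I_n^{\mathrm{bot}}(t)\, dt,
\]
the three factors behave on the scale $e^{n\,\cdot}$ as follows: (i) because $k_n = o(n)$ and the top $k_n$ eigenvalues of the GOE all converge to $2$ a.s., $\frac{1}{n}\log I_n^{\mathrm{top}}(t)\to\beta t$ uniformly on compacts; (ii) after rescaling $v_i = u_i/\sqrt{1-t}$, $I_n^{\mathrm{bot}}(t)$ is exactly a spherical SK partition function on $\mathbb{S}_{n-k_n-1}$ whose eigenvalue empirical distribution is still $\mu_{\mathrm{sc}}$, at effective inverse temperature $\tilde\beta(t) = \beta(1-t)\cdot n/(n-k_n)\to \beta(1-t)$, so for $t\ge t^\star := (\beta-1)/\beta$ (equivalently $\tilde\beta(t)\le 1$) the previous paragraph gives $\frac{1}{n}\log I_n^{\mathrm{bot}}(t)\to \beta^2(1-t)^2/4$; (iii) from the Beta density and Stirling, $\frac{1}{n}\log p_n(t)\to \frac{1}{2}\log(1-t)$.

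Adding the three contributions on $[t^\star,1]$ produces
\[
g(t) := \frac{1}{2}\log(1-t) + \beta t + \frac{\beta^2(1-t)^2}{4}.
\]
The equation $g'(t)=0$ reduces to $(\beta(1-t)-1)^2=0$, so the unique critical point on $(0,1)$ is $t=t^\star$, with value $g(t^\star)=\beta-\frac{3}{4}-\frac{1}{2}\log\beta$, matching the claim. Restricting the $t$-integral to a small neighbourhood of $t^\star$ supplies the matching lower bound, and standard Gaussian (Lipschitz) concentration of $F_n/n$ on the GOE upgrades the convergence from expectation to almost sure. The main obstacle, and the step I expect to require the most care, is the upper bound on the region $t<t^\star$, where $\tilde\beta(t)>1$ and the naive high-temperature bulk estimate over-shoots (note $g(0)=\beta^2/4 > g(t^\star)$ for $\beta>1$, so one cannot simply substitute the high-temperature formula). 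My plan is to handle this by applying the same top/bulk decomposition recursively to $I_n^{\mathrm{bot}}(t)$: stratify the bot by its own top overlap $s$ on the next $o(n)$ bulk eigenvalues and restrict to $s \ge (\tilde\beta(t)-1)/\tilde\beta(t)$, so that the innermost bulk is high-temperature and the previous step applies. A short algebraic check shows that the \emph{cumulative} top overlap $t + (1-t)\cdot(\tilde\beta(t)-1)/\tilde\beta(t)$ collapses to $1-1/\beta = t^\star$ independently of $t$, and the combined bound simplifies to $g(t^\star)$. This promotes $g(t^\star)$ to a uniform upper bound on $\frac{1}{n}\log[p_n(t)I_n^{\mathrm{top}}(t)I_n^{\mathrm{bot}}(t)]$ over all $t\in[0,1]$, and Laplace integration over $t$ then closes the proof.
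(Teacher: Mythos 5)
Your diagonalisation and the high-temperature contour/saddle argument are fine and match \cite{belius2019tap}, and your lower bound for $\beta>1$ is correct: restricting to $t\ge t^\star=(\beta-1)/\beta$ keeps the bulk subcritical, $g$ is decreasing on $[t^\star,1]$, and $g(t^\star)=\beta-\tfrac34-\tfrac12\log\beta$ as claimed. The gap is in the upper bound on $[0,t^\star)$, exactly where you flagged it, and I do not think the recursion as you have stated it closes it. Note first that $g'(t)=-\frac{(\beta(1-t)-1)^2}{2(1-t)}\le 0$ on all of $[0,1)$, so $t^\star$ is a degenerate critical point (double root), not a maximum, and $\sup_{[0,1]}g=g(0)=\beta^2/4$. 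So the high-temperature formula genuinely overshoots on $[0,t^\star)$ and must be replaced, not merely revisited. Your proposed fix stratifies $I_n^{\mathrm{bot}}(t)$ by $s$ and then \emph{restricts} to $s\ge s^\star(t)$. That restriction shrinks the domain of integration and therefore yields a lower bound on $I_n^{\mathrm{bot}}(t)$, not an upper bound; for an upper bound you must control $s<s^\star(t)$ as well, where the innermost bulk again has effective temperature $>1$, and the same difficulty recurs. The recursion does not terminate in finitely many steps, and attempting to run it to convergence implicitly assumes the low-temperature formula for the inner partition function, which is circular. The cumulative-overlap identity you compute ($t+(1-t)s^\star(t)=t^\star$) is correct algebra, but it only shows that evaluating the two-level expression \emph{at the critical $s$} reproduces $g(t^\star)$; it does not show that the supremum of the two-level expression over all admissible $(t,s)$ is $g(t^\star)$.

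The paper avoids this entirely by a different decomposition: instead of a single top/bulk split and recursion, it discretises the \emph{whole} spectrum into $K$ blocks, observes that the overlap vector $(r_1,\dots,r_K)$ is exactly $\mathrm{Dir}(\tfrac{n}{2K},\dots,\tfrac{n}{2K})$, and writes the partition function as an explicit integral of $\exp\{n\Phi_K(r)\}$ over the simplex, where $\Phi_K$ (the expression in \eqref{eq:optact}) is \emph{concave}. Concavity gives a unique Lagrange critical point which is a global maximum over the simplex, and the upper bound is then trivially $\exp\{n\max\Phi_K\}\cdot\mathrm{Vol}(S_K)$, valid at all temperatures with no case split in the upper bound step. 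The phase transition shows up only in how the Lagrange multiplier $\Gamma$ behaves: for $\beta\le 1$ one solves the Stieltjes-transform equation directly, while for $\beta>1$ one must take $\frac{2\Gamma}{\beta}=2+\frac{1}{(\beta-1)K}$, placing the maximiser so that the top block carries macroscopic weight $v_1\approx(\beta-1)/\beta$. If you want to keep your two-block formulation, you would need an independent a priori upper bound on $h(a):=\lim\frac1n\log Z_n(a)$ for $a>1$ that is strictly better than the trivial $h(a)\le a$ (e.g.\ the full simplex-concavity argument, or a convexity-in-$\beta$ chord bound); absent that, replacing your split by the paper's simultaneous $K$-block Dirichlet decomposition is the cleanest way to close the proof.
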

\section{Dirichlet distributions}
The proof of Theorem \ref{thm:main} requires some properties of Dirichlet distributions. So we at first introduce them.   
\begin{definition}(Dirichlet distributions)
Dirichlet distributions are a family of multivariate distributions parametrized by a vector $\underline{\alpha}=(\alpha_{1},\ldots, \alpha_{k})$. Suppose $\underline{Y}=(Y_{1}, \ldots, Y_{k}) \sim \mathrm{Dir}(\alpha_{1},\ldots, \alpha_{k})$, then $Y$ is supported on the simplex $S_{k}:=\{0\le y_{i} \le 1\} \cap \{ \sum_{i=1}^{k} y_{i} =1\}$. The p.d.f. (probability density function) at $(y_{1},\ldots , y_{k})$ is given by 
\begin{equation}\label{eq:dirden}
    f_{\alpha_{1},\ldots, \alpha_{k}}(y_{1},\ldots, y_{k})= \frac{\Gamma(\alpha_{1}+ \ldots + \alpha_{k})}{\prod_{i=1}^{k} \Gamma(\alpha_{i})} \prod_{i=1}^{k} y_{i}^{\alpha_{i}-1}.
\end{equation}
\end{definition}
We would require the following property of the Dirichlet distribution: 
\begin{lemma}\label{lem:diruse}
Suppose $X_{1},\ldots, X_{k}$ are independent $\Gamma(\alpha_{1},\tau),\ldots \Gamma(\alpha_{k},\tau)$ respectively, then $(Y_{1},\ldots, Y_{k}) \sim \mathrm{Dir}(\alpha_{1},\ldots, \alpha_{k})$, where $Y_{i}:=  \frac{X_{i}}{X_{1}+ \ldots + X_{k}}$.
\end{lemma}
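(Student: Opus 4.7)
The plan is to carry out the standard change-of-variables calculation. I will write down the joint density of $(X_1,\dots,X_k)$, transform to coordinates $(Y_1,\dots,Y_{k-1},S)$ where $S=X_1+\cdots+X_k$ and $Y_i=X_i/S$, and then read off the marginal density of $(Y_1,\dots,Y_{k-1})$ after integrating out $S$.

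Concretely, since the $X_i$ are independent with $X_i\sim\Gamma(\alpha_i,\tau)$, their joint density on $(0,\infty)^k$ equals
\[
\left[\prod_{i=1}^k \frac{\tau^{\alpha_i}}{\Gamma(\alpha_i)}\right] \left(\prod_{i=1}^k x_i^{\alpha_i-1}\right) \exp\!\left(-\tau\sum_{i=1}^k x_i\right).
\]
Under the map $(x_1,\dots,x_k)\mapsto(y_1,\dots,y_{k-1},s)$ with $x_i=sy_i$ for $i<k$ and $x_k=s(1-\sum_{i<k}y_i)$, the inverse Jacobian determinant is $s^{k-1}$ (an easy row-reduction on the $k\times k$ matrix of partials).

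Substituting and collecting powers of $s$ gives the joint density of $(Y_1,\dots,Y_{k-1},S)$ as a product
\[
\underbrace{\frac{\tau^{\sum\alpha_i}}{\Gamma(\sum\alpha_i)}\, s^{\sum\alpha_i -1}e^{-\tau s}}_{\text{density of }\Gamma(\sum\alpha_i,\tau)}\;\cdot\;\underbrace{\frac{\Gamma(\sum\alpha_i)}{\prod\Gamma(\alpha_i)}\prod_{i=1}^{k-1} y_i^{\alpha_i-1}\bigl(1-\textstyle\sum_{i<k} y_i\bigr)^{\alpha_k-1}}_{\text{Dirichlet density on the simplex}}
\]
over the region $y_i\ge 0$, $\sum y_i\le 1$, $s>0$. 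This factorization simultaneously establishes two things: that $S$ is independent of $(Y_1,\dots,Y_{k-1})$ with $S\sim\Gamma(\sum\alpha_i,\tau)$, and that $(Y_1,\dots,Y_{k-1})$ has density matching \eqref{eq:dirden} under the convention $y_k=1-\sum_{i<k}y_i$. Hence $(Y_1,\dots,Y_k)\sim\mathrm{Dir}(\alpha_1,\dots,\alpha_k)$.

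There is no real obstacle here; the only mildly fiddly step is confirming that the Jacobian is $s^{k-1}$, which can be done by expanding along the $s$-column after subtracting $s$ times the $y_i$-columns, or equivalently by noting that the map $(y_1,\dots,y_{k-1})\mapsto(sy_1,\dots,sy_{k-1},s(1-\sum y_i))$ is a linear scaling on the $(k-1)$-dimensional simplex followed by adjoining $s$ as the radial coordinate.
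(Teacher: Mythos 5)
Your calculation is correct and is precisely the multivariate change-of-variables argument the paper alludes to (the paper states only that the proof ``uses a multivariate change of variable formula'' and omits the details). Your Jacobian $s^{k-1}$, the factorization into a $\Gamma(\sum\alpha_i,\tau)$ density in $s$ times the Dirichlet density in $(y_1,\dots,y_{k-1})$, and the identification with \eqref{eq:dirden} are all correct.
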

The proof of Lemma \ref{lem:diruse} is simple and uses a multivariate change of variable formula. So we omit it.
We know that $\chi^{2}(2\alpha)$ distribution is same as  $\Gamma(\alpha,2)$ distribution. Hence, in Lemma \ref{lem:diruse}, we can take $X_{1},\ldots, X_{k}$ independent $\chi^{2}(2\alpha_{1}), \ldots \chi^{2}(2\alpha_{k})$.
\section{Proof of the main result}
In this section, we provide the proof of Theorem \ref{thm:main}. 
\begin{proof}[Proof of Theorem \ref{thm:main}]
The proof of Theorem \ref{thm:main} contains the following four parts: (i) Diagonalization, (ii) Discretization (iii) Optimization  and (iv) Laplace integration method. We shall complete these steps one by one. 

\noindent 
\textbf{ (i) Diagonalization:} This step is straightforward. As $\underline{\sigma}$ is uniformly distributed over $\mathbb{S}_{n-1}$, $H\underline{\sigma}$ is also uniformly distributed over $\mathbb{S}_{n-1}$ for any orthogonal matrix $H$. As a consequence, 
\begin{equation}
\begin{split}
Z_{n}(\beta) &= \int_{\mathbb{S}_{n-1}} \exp\left\{ \beta H_{n}(\underline{\sigma}) \right\} d\mu_{n}(\underline{\sigma})\\
&=\int_{\mathbb{S}_{n-1}} \exp\left\{ \frac{n\beta}{2} \underline{\sigma}' H_{n}' \Lambda_{n} H_{n} \underline{\sigma} \right\} d\mu_{n}(\underline{\sigma})\\
&= \int_{\mathbb{S}_{n-1}} \exp\left\{ \frac{n\beta}{2} \sum_{i=1}^{n} \lambda_{i} \sigma_{i}^2 \right\} d\mu_{n}(\underline{\sigma}).
\end{split}
\end{equation}
Here $X_{n} = H_{n}' \Lambda H_{n}$ is the spectral decomposition of $X_{n}$ and $\lambda_{1}\ge \lambda_{2}\ge \ldots \ge \lambda_{n}$ are the eigenvalues of $X_{n}$.

\noindent 
\textbf{(ii) Discretization:}
We take a finite but large enough $K$ and make a discrete approximation of the semicircular law. Let $2=\tilde{\lambda}_{1}\ge \ldots \ge \tilde{\lambda}_{K}\ge \tilde{\lambda}_{K+1}=-2$ be such that 
\[
\int_{\tilde{\lambda}_{j+1}}^{\tilde{\lambda}_{j}} d\mu_{\mathrm{sc}}(x)= \frac{1}{K}. 
\]
Fixing $\varepsilon >0$, we take $K$ large enough so that 
\begin{equation}\label{eq:appI}
 \sup_{1 \le i \le K} \sup_{\frac{(i-1)n}{K}+1 \le j \le \frac{in}{K}} \left| \lambda_{j}- \tilde{\lambda}_{i} \right| \le \varepsilon.
\end{equation}
This is ensured from the eigenvalue rigidity of the GOE matrix. For example see \cite{erdHos2012rigidity}. From \eqref{eq:appI}, observe that 
\begin{equation}\label{eq:appII}
    \begin{split}
      \left| \frac{n\beta}{2} \sum_{j=1}^{n} \lambda_{j} \sigma_{j}^2 - \frac{n\beta}{2} \sum_{i=1}^{K} \tilde{\lambda}_{i}\left(\sum_{j=\frac{(i-1)n}{K}+1}^{\frac{in}{K}}\sigma_{j}^2\right)  \right| \le \frac{n\beta \varepsilon}{2}.
    \end{split}
\end{equation}
Hence, we shall work with 
\begin{equation}\label{eq:partitionup}
\int_{\mathbb{S}_{n-1}} \exp\left\{ \frac{n\beta}{2} \sum_{i=1}^{K} \tilde{\lambda}_{i}\left(\sum_{j=\frac{(i-1)n}{K}+1}^{\frac{in}{K}}\sigma_{j}^2\right) \right\} d\mu_{n}(\underline{\sigma})
\end{equation}
instead of the actual partition function.  Recall that $\left(\sigma_{1},\ldots, \sigma_{n}\right)' $ has the same distribution as $\left(\frac{Y_{1}}{||Y||},\ldots, \frac{Y_{n}}{||Y||}\right)'$ where $Y= (Y_{1},\ldots, Y_{n})' \sim N_{n}(0,I_{n})$. As a consequence the vector 
$\left( \sum_{j=\frac{(i-1)n}{K}+1}^{\frac{in}{K}}\sigma_{j}^2\right)_{1\le i \le K} \sim \mathrm{Dir}\left( \frac{n}{2K},\ldots, \frac{n}{2K} \right)$. Hence, \eqref{eq:partitionup} can be rewritten as 
\[
\mathrm{E}\left[ \exp\left\{ \frac{n\beta}{2}\sum_{i=1}^{K} \tilde{\lambda }_{i} r_{i}\right\} \right]
\]
where $(r_{1},\ldots, r_{K}) \sim \mathrm{Dir}(\frac{n}{2K},\ldots, \frac{n}{2K})$.
Hence, 
\begin{equation}\label{eq:integrand}
\mathrm{E}\left[ \exp\left\{ \frac{n\beta}{2}\sum_{i=1}^{K} \tilde{\lambda }_{i} r_{i}\right\} \right]= \int_{S_{K}} \exp\left\{ \frac{n\beta}{2}\sum_{i=1}^{K} \tilde{\lambda }_{i} r_{i}\right\} f_{\frac{n}{2K},\ldots, \frac{n}{2K}}(r_{1},\ldots,r_{K}) dr_{1}\ldots dr_{K}
\end{equation}

This concludes our second step.\\
\textbf{(iii) Optimization:} We now come to the third step. First of all, we do some asymptotics on the p.d.f. of $\mathrm{Dir}\left( \frac{n}{2K}, \ldots, \frac{n}{2K} \right)$. From \eqref{eq:dirden} we know that 
\begin{equation}
    \begin{split}
      f_{\frac{n}{2K},\ldots, \frac{n}{2K}}(v_{1},\ldots, v_{K})= \frac{\Gamma(\frac{n}{2})}{\left( \Gamma(\frac{n}{2K}) \right)^{K}} \prod_{i=1}^{k} v_{i}^{\frac{n}{2K}-1}.
    \end{split}
\end{equation}
We approximate $\Gamma(\frac{n}{2})$ by $\left( \frac{n}{2}
\right)^{\frac{n}{2}}\exp\left\{ - \frac{n}{2} +o(n) \right\}$ and $\Gamma(\frac{n}{2K})$ by $\left( \frac{n}{2K} \right)^{\frac{n}{2K}}\exp\left\{ - \frac{n}{2K} +o(n)\right\}$. These are estimates are obtained from Stirling's approximation. Hence, 
\begin{equation}
\begin{split}
\frac{\Gamma(\frac{n}{2})}{\left( \Gamma(\frac{n}{2K}) \right)^{K}} &= \left(K\right)^{\frac{n}{2}} \exp\{ o(n)\}\\
&= \exp\left\{ \frac{n}{2}\log K + o(n) \right\}.
\end{split}
\end{equation}
We shall now consider the integrand in the r.h.s. of \eqref{eq:integrand} and shall optimize with respect to $v_{1},\ldots, v_{K}$. The integrand in the r.h.s. of \eqref{eq:integrand} becomes: 
\begin{equation}\label{eq:optimizing}
    \begin{split}
     \exp\left\{ \frac{n\beta}{2} \sum_{i=1}^{K} \tilde{\lambda}_{i} v_{i} + \frac{n\log K}{2} + \left(\frac{n}{2K}-1\right) \sum_{i=1}^{K} \log v_{i}  \right\}
    \end{split}
\end{equation}
The rest of this step is devoted to maximize \eqref{eq:optimizing} where $v_{1},\ldots, v_{K}$ varies over $S_{K}$. Hence, it is enough to find
\begin{equation}\label{eq:optact}
    \max_{v_{1},\ldots,v_{K} \in S_{K}} \frac{n\beta}{2} \sum_{i=1}^{K} \tilde{\lambda}_{i} v_{i} + \frac{n\log K}{2} + \left(\frac{n}{2K}-1\right) \sum_{i=1}^{K} \log v_{i}.
\end{equation}
\eqref{eq:optact}
is concave in $v_{1},\ldots, v_{K}$. Hence, the Lagrange multiplier method gives the maximum value when $v_{1},\ldots, v_{K}$ varies over $S_{K}$. 
Consider the following function:
\begin{equation}
\begin{split}
    G(v_{1},\ldots,v_{K}, \Gamma):=
     \frac{\beta}{2} \sum_{i=1}^{K} \tilde{\lambda}_{i} v_{i} + \frac{\log K}{2} + \left(\frac{1}{2K}-\frac{1}{n}\right) \sum_{i=1}^{K} \log v_{i} - \Gamma\left(  \sum_{i=1}^{K} v_{i}-1\right).
\end{split}    
\end{equation}

\noindent 
Taking $ \frac{\partial G}{\partial v_{i}}$ and putting it $0$, we have 
\begin{equation}
\begin{split}
    &\frac{\partial G}{\partial v_{i}}= \frac{\beta}{2} \tilde{\lambda}_{i} + \left( \frac{1}{2K} - \frac{1}{n}\right) \frac{1}{v_{i}} - \Gamma = 0\\ 
    & \Rightarrow \left( \frac{1}{2K} - \frac{1}{n}\right) \frac{1}{v_{i}}= \Gamma - \frac{\beta}{2} \tilde{\lambda}_{i}\\
    & \Rightarrow \left( \frac{1}{2K} - \frac{1}{n}\right) \frac{1}{\Gamma - \frac{\beta}{2} \tilde{\lambda}_{i}} = v_{i}
\end{split}    
\end{equation}
The fact that $\sum_{i=1}^{K} v_{i}=1$ gives:
\begin{equation}\label{eq:resolvent}
    \begin{split}
      \left( \frac{1}{2K} - \frac{1}{n}\right) \sum_{i=1}^{K} \frac{1}{\Gamma - \frac{\beta}{2}\tilde{\lambda}_{i}} =1 \\ \Rightarrow \left( \frac{1}{K}- \frac{2}{n} \right) \sum_{i=1}^{K} \frac{1}{\frac{2\Gamma}{\beta }- \tilde{\lambda}_{i}}= \beta
    \end{split}
\end{equation}
where $\frac{2\Gamma}{\beta} > \tilde{\lambda}_{1}=2$. Observe that as $K$ becomes larger and larger, the l.h.s. of second display of \eqref{eq:resolvent} converges to $-S_{\mu_{\mathrm{sc}}}(\frac{2\Gamma}{\beta})$.
We now analyze \eqref{eq:resolvent} case by case.

\noindent 
\textbf{Case I:} Here we consider $\beta <1$. We know that $-S_{\mu_{\mathrm{sc}}}(\beta + \frac{1}{\beta})= \beta$ whenever $\beta < 1 $. Hence, putting $\frac{2\Gamma}{\beta}= \beta+ \frac{1}{\beta}$ makes the l.h.s. of second display of \eqref{eq:resolvent} $\beta + o_{K}(1)$. Using this value of $\frac{2\Gamma}{\beta}$, we get $v_{i}= \frac{1}{\beta K}\frac{1}{\frac{2\Gamma}{\beta}-\tilde{\lambda}_{i}} +o_{K}(1).$ This is the optimizing value of $v_{i}$.
Hence, at the optimizer, the value of \eqref{eq:optact} becomes:
\begin{equation}\label{eq:hightemp}
    \begin{split}
      &\frac{n}{2K} \sum_{i=1}^{K} \frac{\tilde{\lambda}_{i}}{\beta+ \frac{1}{\beta}- \tilde{\lambda}_{i}}\\
      & + \left(\frac{n}{2K}\right) \sum_{i=1}^{K} \left( -\log(\beta) - \log(K) - \log\left(\beta+ \frac{1}{\beta}- \tilde{\lambda}_{i}\right) \right) + \frac{n\log K}{2} + o(n)\\
      =& -\frac{n}{2} + \frac{n(\beta+\frac{1}{\beta})}{2K} \sum_{i=1}^{K} \frac{1}{(\beta+ \frac{1}{\beta})-\tilde{\lambda}_{i}} - \frac{n\log (\beta) }{2} - \frac{n}{2K} \sum_{i=1}^{K} \log \left( \beta+ \frac{1}{\beta} -  \tilde{\lambda}_{i}\right) + o(n)\\
      =& -\frac{n}{2} + \frac{n(\beta + \frac{1}{\beta})}{2} \times \beta - \frac{n\log (\beta)}{2}- \frac{n}{2} \int_{-2}^{2} \log \left( \beta+ \frac{1}{\beta} - x \right)d\mu_{\mathrm{sc}}(x) + o(n)\\
      &= \frac{n\beta^2}{2}- \frac{n\log(\beta)}{2}- \frac{n}{2} \int_{-2}^{2} \log \left( \beta+ \frac{1}{\beta} - x \right)d\mu_{\mathrm{sc}}(x) + o(n)
    \end{split}
\end{equation}
where the $o(n)$ terms are less $\varepsilon n$ for any $\varepsilon >0$ for large enough $K$.
It is a well known fact that 
\begin{equation}\label{eq:logint}
\int_{-2}^{2} \log (z-x)d\mu_{\mathrm{sc}}(x)= \frac{1}{4}z\left(z- \sqrt{z^2-4}\right) + \log \left( z+ \sqrt{z^2-4} \right)- \log 2 -\frac{1}{2}
\end{equation}
for $z\ge 2$ \cite{baik2017fluctuations}. Putting $z= \beta + \frac{1}{\beta}$, \eqref{eq:logint} becomes:
\begin{equation}
\begin{split}
    &\frac{1}{4}\left(\beta + \frac{1}{\beta}\right)(2\beta) + \log\left(\frac{2}{\beta}\right) - \log(2)- \frac{1}{2}\\
    & =\frac{\beta^2}{2} - \log(\beta). 
\end{split}    
\end{equation}
Putting this in \eqref{eq:hightemp}, we have \eqref{eq:hightemp} is $\frac{n\beta^2}{4}+o(n)$.

\noindent 
\textbf{Case II:} Now, we consider $\beta =1$. The analysis is essentially similar to the $\beta <1$ case as 
\[
\int_{-2}^{2} \frac{1}{2-x} d\mu_{\mathrm{sc}}(x)=1.
\] 
However, we cannot take $\frac{2\Gamma}{\beta}=2$ as $\tilde{\lambda}_{1}=2$. Here we fix $\varepsilon >0$ arbitrarily small, take $\frac{2\Gamma}{\beta}=2+ \varepsilon$ and repeat the analysis of the earlier case. 

\noindent 
\textbf{Case III:} Finally we consider the low temperature case, that is, $\beta >1$. Recall that it is not possible to find $z\ge 2$ such that $\int_{-2}^{2} \frac{1}{z-x}d\mu_{\mathrm{sc}}(x)= \beta.$ However, we actually need to find the solution of \eqref{eq:resolvent} rather than finding $z$ such that $\int_{-2}^{2} \frac{1}{z-x}d\mu_{\mathrm{sc}}(x)= \beta.$ In this case take $\frac{2\Gamma}{\beta}= 2+ \frac{1}{(\beta -1)K}$ so that $\frac{1}{K} \frac{1}{\frac{2\Gamma}{\beta}- 2}=\beta -1$. Now, we shall prove that for this choice of $\Gamma$,
\[
\frac{1}{K}\sum_{i=2}^{K} \frac{1}{\frac{2\Gamma}{\beta}- \tilde{\lambda}_{i}} = 1 +o(1) 
\]
where the $o(1)$ term goes to $0$ as $K \to \infty$. The proof is simple. We know that 
\[
\frac{1}{K}\sum_{i=2}^{K} \frac{1}{2- \tilde{\lambda}_{i}} \to \int_{-2}^{2} \frac{1}{2-x} d\mu_{\mathrm{sc}}(x)=1 
\]
as $K \to \infty$. Now ,
\begin{equation}
\begin{split}
&1=\lim_{\varepsilon \to 0} \lim_{K \to \infty}\frac{1}{K}\sum_{i=2}^{K} \frac{1}{2+\varepsilon- \tilde{\lambda}_{i}}\le \liminf_{K \to \infty}\frac{1}{K}\sum_{i=2}^{K} \frac{1}{\frac{2\Gamma}{\beta}- \tilde{\lambda}_{i}}\\
&~~~~~~~~~~~~~~~~~~~~~~~~~~~~~\le\limsup_{K \to \infty}\frac{1}{K}\sum_{i=2}^{K} \frac{1}{\frac{2\Gamma}{\beta}- \tilde{\lambda}_{i}} \le \lim_{K \to \infty} \frac{1}{K}\sum_{i=2}^{K} \frac{1}{2- \tilde{\lambda}_{i}}=1
\end{split}
\end{equation}
Hence, taking $\frac{2\Gamma}{\beta}= 2+ \frac{1}{(\beta-1)K}$, gives us 
\[
\frac{1}{K}\sum_{i=1}^{K} \frac{1}{\frac{2\Gamma}{\beta}-\tilde{\lambda}_{i}}= \beta +o(1).
\]
Hence, at the optimizer \eqref{eq:optact} becomes:
\begin{equation}
    \begin{split}
        &\frac{n\beta}{2} \sum_{i=1}^{K} \tilde{\lambda}_{i}v_{i} + \frac{n\log K}{2}+ \frac{n}{2K} \sum_{i=1}^{K} \log (v_{i}) +o(n) \\
        &= n\beta v_{1}+ \frac{n\beta}{2}\sum_{i=2}^{K}\tilde{\lambda}_{i} v_{i} + \frac{n\log K}{2} + \frac{n}{2K} \log(v_{1}) + \frac{n}{2K}\sum_{i=2}^{K} \log (v_{i}) +o(n)\\
        &= n \beta \frac{\beta-1}{\beta} + \frac{n}{2K}\sum_{i=2}^{K}\frac{\tilde{\lambda}_{i}}{\frac{2\Gamma}{\beta}- \tilde{\lambda}_{i}} + \frac{n\log K}{2} + \frac{n}{2K} \log\left(\frac{\beta-1}{\beta}\right) \\
        &~~~~~~~~~~~~~~~+ \frac{n}{2K} \sum_{i=2}^{K} \left( -\log K - \log \beta- \log\left(\frac{2\Gamma}{\beta}-\tilde{\lambda}_{i}\right) \right) +o(n)\\
        &= n(\beta-1) - \frac{n}{2} + n + \frac{n\log K}{2K} + \frac{n}{2K} \log\left( \frac{\beta-1}{\beta} \right)- \frac{n(K-1)}{2K}\log \beta\\
        &~~~~~~~~~~~~~~~~~~~~- \frac{n}{2} \int_{-2}^{2} \log(2-x) d\mu_{\mathrm{sc}}(x) +o(n).\\
        & =n(\beta -1) + \frac{n}{2} - \frac{n}{2}\log \beta - \frac{n}{2}\left( \frac{1}{2} \right) +o(n)\\
        &= n\beta -  \frac{3n}{4} - \frac{n}{2}\log (\beta) +o(n). 
    \end{split}
\end{equation}
This concludes the optimization step. 
We call the optimized value of \eqref{eq:optact} at any $\beta$, $F_{\mathrm{opt}}(\beta)$.

\noindent 
\textbf{(iv) Laplace's method of integration:} This step is standard. However, for completeness, we sketch it here.
Firstly, the l.h.s. of \eqref{eq:integrand} is clearly bounded by $\exp\left\{ F_{\mathrm{opt}}(\beta) \right\}$. On the other hand, the quantity \eqref{eq:optact} is continuous in it's arguments and the solution is in the interior of $S_{K}$. So for any $\varepsilon >0$, we can find an open ball $B_{\varepsilon}$ around the optimizer so that the value of \eqref{eq:optact} is uniformly bigger than $F_{opt} - n\varepsilon$ on $B_{\varepsilon}$. So l.h.s. of \eqref{eq:integrand} is bigger than $\exp\left\{  F_{opt} - n\varepsilon + \log (\mathrm{Vol}(B_{\varepsilon}))\right\}$. As $\log (\mathrm{Vol}(B_{\varepsilon}))$ is a finite quantity for every fixed $\varepsilon$, we get the result.
\end{proof}
\textbf{Acknowledgements:} I am grateful to Prof. David Belius and his group and Prof. Partha Sarathi Dey for private communications and discussions. 
\bibliographystyle{alpha}
\bibliography{main}
\end{document}